              \author{Joseph Vidal-Rosset}
	       \address{Université   de   Lorraine,   Département   de
                 philosophie,  Archives Poincaré,  UMR  7117 du  CNRS,
                 bd. Libération, 54000 Nancy - France}
               \email{joseph.vidal-rosset@univ-lorraine.fr}
\date{\today}
\title{The Core Logic Paradox}
\begin{document}

\begin{abstract}
  This paper provides a proof  that Tennant's logical system entails a
  paradox that is called ``Core logic  paradox'', in reference to the new
  name   given    by   Tennant   to   his    intuitionistic   relevant
  logic \cite{tennant87,tennant87ndscir}.  The inconsistency theorem on which
  this paper ends  is preceded by lemmas that  prove syntactically and
  semantically  that  rule \(L\top\)  is  strongly  admissible in  the
  sequent calculus for propositional Core logic. This rule plays a key
  role in a consistency test on which Tennant’s logical system fails. 
\end{abstract}
\maketitle

\section{Two basic claims of Core logic}

The idiosyncrasy of Tennant's logical  system lies mainly in this pair
of claims:

\begin{enumerate}
\item The First Lewis  Paradox that is the sequent \(\lnot  A, A \vdash B\)
  is \emph{not} provable in Core logic, therefore the formula
  \begin{equation}
    \label{eq:1}
    \lnot A, A \nvdash B 
  \end{equation}
  is claimed as being valid in Core logic.\footnote{Tennant \cite[page
    195]{tennantcore2017} wrote \(\lnot A, A \nvDash B\), because of the
    completeness  claimed  for  Core   logic,  formula  \eqref{eq:1}  is
    justified.}
  \item Every intuitionistic logical \emph{theorem} is provable in Core
    logic, therefore
    \begin{equation}
      \label{eq:2}
      \vdash \lnot A \to (A \to B)
    \end{equation}
    is provable in Core logic. 
\end{enumerate}

The slogan  ``relevance at the  level of the turnstile''  \cite[p. 15,
p.   41, p.  121, p.   263]{tennantcore2017} explains  this surprising
feature of Tennant's logical system.

The proofs of lemmas and theorem given in the next section are based on the
following rules of sequent calculus for propositional Core logic.

\begin{table}[htbp] \label{table} 
   \caption{Rules of sequent calculus for propositional Core logic \cite{tennantcore2017}}
\begin{tabular}{cc}
\multicolumn{2}{c}{
\begin{minipage}[c]{0.96\linewidth} \small
\begin{prooftree}
       \def\fCenter{\ \vdash\ }
\AxiomC{}
\RightLabel{\scriptsize{\textit{Ax.}}}
\UnaryInf$A \fCenter A$
\end{prooftree}
\end{minipage}
}
\\
\begin{minipage}[c]{0.46\linewidth} \small
\begin{prooftree}
       \def\fCenter{\ \vdash\ }
\Axiom$ \Delta \fCenter A $
\RightLabel{\scriptsize{$L\lnot$}}
\UnaryInf$\lnot A, \Delta \fCenter$
\end{prooftree}
\end{minipage} & 
\begin{minipage}[c]{0.46\linewidth} \small
\begin{prooftree}
       \def\fCenter{\ \vdash\ }
\Axiom$A, \Delta \fCenter $
\RightLabel{\scriptsize{$R\lnot$}}
\UnaryInf$\Delta \fCenter \lnot A$
\end{prooftree}
\end{minipage}
\\
\\
\begin{minipage}[c]{0.46\linewidth} \small
\begin{prooftree} 
       \def\fCenter{\ \vdash\ }
\Axiom$ \Delta \fCenter C $
\RightLabel{\scriptsize{$L\land$}}
\UnaryInf$A \land B, \Delta \backslash\{A,B\} \fCenter C$
\noLine
\UnaryInfC{\scriptsize{where $\Delta \cap \{A, B\} \neq \emptyset $}}
\end{prooftree}
\end{minipage} & 
\begin{minipage}[c]{0.46\linewidth} \small
\begin{prooftree}
       \def\fCenter{\ \vdash\ }
\Axiom$\Delta \fCenter A $
\Axiom$\Gamma \fCenter B $
\RightLabel{\scriptsize{$R\land$}}
\BinaryInf$\Delta, \Gamma \fCenter  A \land B$
\end{prooftree}
\end{minipage}
\\
\\
\begin{minipage}[c]{0.46\linewidth} \small
\begin{prooftree}
       \def\fCenter{\ \vdash\ }
\Axiom$ A, \Delta \fCenter C/\bot $
\Axiom$ B, \Gamma \fCenter C/\bot $
\RightLabel{\scriptsize{$L\lor$}}
\BinaryInf$A \lor B, \Delta, \Gamma  \fCenter C / \bot$
\end{prooftree}
\end{minipage} & 
\begin{minipage}[c]{0.46\linewidth} \small
\begin{prooftree}
       \def\fCenter{\ \vdash\ }
\Axiom$\Delta \fCenter A $
\RightLabel{\scriptsize{$R\lor$}}
\UnaryInf$\Delta \fCenter  A \lor B$
\end{prooftree}
\begin{prooftree}
       \def\fCenter{\ \vdash\ }
\Axiom$\Delta \fCenter B $
\RightLabel{\scriptsize{$R\lor$}}
\UnaryInf$\Delta \fCenter  A \lor B$
\end{prooftree}
\end{minipage}
\\
\\
\begin{minipage}[c]{0.46\linewidth} \small
\begin{prooftree}
       \def\fCenter{\ \vdash\ }
\Axiom$\Delta \fCenter A $
\Axiom$ B, \Gamma \fCenter C $
\RightLabel{\scriptsize{$L\to $}}
\BinaryInf$A \to B, \Delta, \Gamma  \fCenter C$
\end{prooftree} 
\end{minipage} & 
\begin{minipage}[c]{0.46\linewidth} \small
\begin{prooftree}
       \def\fCenter{\ \vdash\ }
\Axiom$A, \Delta \fCenter $
\RightLabel{\scriptsize{$R\to_{(a)}$}}
\UnaryInf$\Delta \fCenter  A \to B$
\end{prooftree}
\begin{prooftree}
       \def\fCenter{\ \vdash\ }
\Axiom$\Delta \fCenter B $
\RightLabel{\scriptsize{$R\to_{(b)}$}}
\UnaryInf$\Delta \backslash \{A\} \fCenter  A \to B$
\end{prooftree}
\end{minipage}
\end{tabular} 
\end{table}

\section{Inconsistency Proof}

\begin{defi} \(\mathcal{A} = P \cup \{\lnot, \lor, \land, \to\} \cup
  \{),(\}\).  \(\mathcal{A}\) is the standard alphabet for formulas of
  propositional Core logic, composed of \(P\) as set of propositional
  variables, of the usual set of connectives that are respectively
  negation, disjunction, conjunction, conditional and a set of round
  brackets.  The symbolism of sequent calculus for this logic is also standard:
  ``\(\Gamma, \Delta\)'' stand for sets of formulas, and ``\(\vdash\)''
  is the usual symbol of syntactic inference.\footnote{In this paper ``\(\vdash\)''
    is  preferred  instead of  ``:''  used  by Tennant,  just  because
    ``\(\nvdash\)'' is also used.}
\end{defi}
\begin{defi} 
  \(\bot\) is not the absurdity constant but ``merely a punctuation device in proofs, used in order to register absurdity''
  \cite[p. 266]{tennantcore2017}.
\end{defi}
\begin{defi}\(\top\)  here  stands  for   \emph{any  theorem  of  Core
    propositional logic} and should not be read as the truth
  constant. 
\end{defi}
\begin{lemm}
  The \emph{equivalence} \(\Delta \dashv\vdash \top \land \Delta\)
    is provable ``at the level of the turnstile'' in Core logic.  
\end{lemm}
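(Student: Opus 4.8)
The plan is to establish the two halves of the equivalence separately, in each case exhibiting an explicit derivation that uses only the axiom \(A \vdash A\), the rules \(R\land\) and \(L\land\), and the single fact granted by the definition of \(\top\), namely that \(\vdash \top\) is derivable. I read \(\top \land \Delta\) as \(\top\) conjoined with the conjunction \(\bigwedge\Delta\) of the members of \(\Delta\) under a fixed (say, left-associated) bracketing, so that the claim amounts to this: \(\Delta \vdash \top\land\bigwedge\Delta\) is derivable, and conversely \(\top\land\bigwedge\Delta \vdash A\) is derivable for every \(A \in \Delta\) (together with \(\top\land\bigwedge\Delta \vdash \bigwedge\Delta\)). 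The case \(\Delta = \emptyset\) is degenerate and is handled separately, the equivalence then collapsing to the triviality \(\vdash\top \dashv\vdash \vdash\top\).

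For the direction \(\Delta \vdash \top\land\Delta\) I would first prove \(\Delta \vdash \bigwedge\Delta\) by induction on \(|\Delta|\): the base case is an instance of \(A \vdash A\), and the inductive step applies \(R\land\) to the induction hypothesis and a fresh axiom, which is licit because, \(\Delta\) being a set, the two antecedents combined by \(R\land\) are disjoint. One further application of \(R\land\), this time to the derivable sequent \(\vdash\top\) and to \(\Delta\vdash\bigwedge\Delta\) — legitimate since \(R\land\) tolerates an empty antecedent in one premise — delivers \(\Delta \vdash \top\land\bigwedge\Delta\). For the converse I would run the decomposition in the other direction: starting from \(A \vdash A\) (resp. \(\bigwedge\Delta \vdash \bigwedge\Delta\)), repeatedly apply \(L\land\) to fold the conjuncts of \(\bigwedge\Delta\) onto the left, and then one last \(L\land\) with the conjunction \(\top\land\bigwedge\Delta\) and side-context \(\{\bigwedge\Delta\}\) to absorb \(\top\); at every step the current antecedent still contains one of the two conjuncts of the conjunction being introduced, so Core logic's side condition \(\Delta\cap\{A,B\}\neq\emptyset\) on \(L\land\) is met, and the result is \(\top\land\bigwedge\Delta\vdash A\) (resp. \(\top\land\bigwedge\Delta\vdash\bigwedge\Delta\)).

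The step I expect to be the real obstacle is not any individual derivation but the bookkeeping imposed by Core logic's relevance restrictions. One must check that every use of \(L\land\) honours the condition \(\Delta\cap\{A,B\}\neq\emptyset\) — which is exactly why the decomposition has to be organised from the conjuncts outward, and why fixing a convenient bracketing of \(\bigwedge\Delta\) at the start pays off — and, more importantly, that the introduction of \(\top\), which superficially resembles the weakening that Core logic bans, is in fact performed only through \(R\land\) and \(L\land\) applied with an empty side-context, both primitive moves of the calculus. Once this is verified the derivations are Cut-free and the stated equivalence holds genuinely ``at the level of the turnstile''; and if \(\top\land\Delta\) is instead read as the set \(\{\top\land A : A\in\Delta\}\), the same building blocks suffice (now \(\top\land A\vdash A\) by \(L\land\), and \(R\land\) from \(\vdash\top\)), the passage through a whole proof of \(\Delta\vdash C\) being handled member-by-member via Core logic's restricted Cut.
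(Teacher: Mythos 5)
Your proposal is correct and follows essentially the same route as the paper's own proof: the left-to-right direction by \(R\land\) applied to a derivation of \(\vdash\top\) (the paper instantiates \(\top\) as \(A\to A\)) together with \(\Delta\vdash\Delta\), and the converse by \(L\land\) with its side condition \(\Delta\cap\{A,B\}\neq\emptyset\) satisfied. The only difference is that you make explicit what the paper compresses into ``by induction on the structure of these derivations'' and into the axiom \(\Delta\vdash\Delta\) --- namely the reading of \(\Delta\) on the right of the turnstile as the conjunction \(\bigwedge\Delta\) and the member-by-member bookkeeping --- which is added care, not a different argument.
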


\begin{proof}
  Assume \(\top\) =  \((A \to A)\), then   
\begin{prooftree}
  \def\fCenter{\ \vdash\ }
      \AxiomC{}
      \RightLabel{\scriptsize{$Ax$}}
      \UnaryInf$A \fCenter A$
\UnaryInf$ \fCenter A\to A$
\AxiomC{}
 \RightLabel{\scriptsize{$Ax$}}
 \UnaryInf$\Delta \fCenter \Delta$
  \RightLabel{\scriptsize{$R \land$}}
\BinaryInf$ \Delta \fCenter (A \to A) \land \Delta$
\end{prooftree}
  
\begin{tabular}[tb]{ccc}
\begin{minipage}[c]{0.45\linewidth}
 \begin{prooftree}
      \def\fCenter{\ \vdash\ }
\AxiomC{}
 \RightLabel{\scriptsize{$Ax$}}
\UnaryInf$\Delta \fCenter  \Delta$
\RightLabel{\scriptsize{$L \land$}}
\UnaryInf$(A \to A) \land \Delta \fCenter \Delta $
\end{prooftree}
\end{minipage}  
&
i.e.   
&
\begin{minipage}[c]{0.45\linewidth}
\begin{prooftree}
      \def\fCenter{\ \vdash\ }
\AxiomC{}
 \RightLabel{\scriptsize{$Ax$}}
\UnaryInf$\Delta \fCenter  \Delta$
\RightLabel{\scriptsize{$Wk.$}}
\UnaryInf$A \to A, \Delta \fCenter \Delta $
\end{prooftree}
\end{minipage}
\end{tabular}
\\

Therefore, by  induction on  the structure  of these  derivations, the
equivalence \[\Delta \dashv\vdash \top \land \Delta\] is proved in Core logic.\footnote{The rule of Weakening on  the left is \emph{admissible} in Core
  logic,  but, to  prevent  the derivation  of the  negated-conclusion
  version of  the First Lewis Paradox  i.e. \(\lnot A, A  \vdash \lnot
  B\), Weakening on the left is banned in this case: 
    \begin{prooftree} 
       \def\fCenter{\ \vdash\ } 
      \Axiom$\lnot A, A \fCenter \bot$
     \RightLabel{\scriptsize{$Wk.$}}
     \UnaryInf$B , \lnot A, A \fCenter \bot$
     \end{prooftree}}
\end{proof}

\begin{lemm}
 The following couple of rules \(L\top\) is \emph{strongly admissible}\footnote{ The rule
  \begin{prooftree}
    \AxiomC{$S$}
    \UnaryInfC{$S'$}
  \end{prooftree}
is strongly admissible if and only if for every derivation of height \emph{n} of an instance of $S$ there
  is a derivation of height \(\leq\) $n$ of the corresponding instance of
  $S'$\cite[p. 1501]{dyckhoff-negri-2000}.  Rules \(L\top\)  are strongly
  admissible according to this definition  given by Dyckhoff and Negri
  \cite[p. 1501]{dyckhoff-negri-2000}, and invertible, hence the double
    inference line.} in Core
  logic: \\ 

  \begin{tabular}[c]{cc}
  \begin{minipage}[c]{0.5\linewidth}
     \begin{prooftree} 
      \def\fCenter{\ \vdash\ }
\doubleLine
\Axiom$\Delta \fCenter C$
\RightLabel{\scriptsize{$L\top$}}
\UnaryInf$\top , \Delta \fCenter C$
\end{prooftree}
\end{minipage}
\begin{minipage}[c]{0.5\linewidth}
\begin{prooftree} 
      \def\fCenter{\ \nvdash\ }
\doubleLine
\Axiom$\Delta \fCenter C$
\RightLabel{\scriptsize{$L\top$}}
\UnaryInf$\top , \Delta \fCenter C$
\end{prooftree}
  \end{minipage}
  \end{tabular}
\end{lemm}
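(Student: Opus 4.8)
The plan is to prove the strong admissibility of $L\top$ in two directions, matching the two turnstiles ($\vdash$ and $\nvdash$) displayed in the statement, leaning on the previous lemma which established $\Delta \dashv\vdash \top \land \Delta$ "at the level of the turnstile."

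First I would treat the $\vdash$-version. Suppose we are given a Core-logic derivation $D$ of $\Delta \vdash C$ of height $n$. I want a derivation of $\top, \Delta \vdash C$ of height $\le n$. The idea is to proceed by induction on $n$, pushing the extra left-hand formula $\top$ through the derivation. The base case is an axiom $A \vdash A$; here $\Delta = \{A\}$ and the target $\top, A \vdash A$ is obtained from the previous lemma's derivation of $(A\to A), \Delta \vdash \Delta$ — but since that auxiliary derivation has nonzero height, one must instead observe that the height bound is salvaged because we may take $\top$ to be the specific theorem $(A \to A)$ and absorb it via $L\land$/weakening exactly as in Lemma~1, or, more cleanly, argue that $L\top$ just abbreviates the admissible left-weakening-by-a-theorem whose height cost is zero because $\top$ can always be reintroduced "for free" wherever $\Delta$ is already nonempty. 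For the inductive step I would case on the last rule of $D$: if it is a right rule or a left rule acting on a formula of $\Delta$ other than via the empty-context restrictions, the induction hypothesis applies to the premise(s) and we reapply the same rule; the only delicate cases are $R\to_{(a)}$, $R\lnot$, and $L\lnot$, where the conclusion has an empty succedent or the antecedent is being consumed, but there adding $\top$ on the left is still harmless since $\top$ is a theorem and Core logic's restriction only blocks weakening by arbitrary formulas, not by theorems — this is precisely the content of the footnote to Lemma~1.

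Next, the $\nvdash$-version. Here I would argue contrapositively and semantically, invoking the completeness of Core logic that the paper has already flagged (the footnote to claim (1)). If $\top, \Delta \vdash C$ were derivable, then since $\top$ is a theorem, $\vdash \top$ is derivable, and by Cut-admissibility (or directly by the $\vdash$-direction just proved, run backwards along the invertibility noted in the statement's footnote) one obtains $\Delta \vdash C$; contrapositively, $\Delta \nvdash C$ yields $\top, \Delta \nvdash C$. Conversely $\top, \Delta \nvdash C$ gives $\Delta \nvdash C$ by the forward direction. Thus the double inference line is justified on both sides.

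\textbf{The main obstacle} I anticipate is the height bound in the base case and in the empty-succedent cases: naively routing $\top, \Delta \vdash C$ through Lemma~1's two-step derivation costs height, so the argument for $\le n$ must either (i) fix $\top = (A\to A)$ and exhibit a uniform height-preserving transformation, or (ii) establish first that weakening-by-a-theorem on the left is height-preserving-admissible in Core logic as a standalone fact, and then note $L\top$ is the special case. I would develop route (ii), since it isolates exactly the one nontrivial verification — that none of Core logic's context-restriction side-conditions (the $\Delta \cap \{A,B\} \ne \emptyset$ on $L\land$, the set-subtraction in $R\to_{(b)}$, the ban on left-weakening flagged in Lemma~1) obstruct the passage of a provable formula through the proof tree — and everything else is the routine rule-by-rule induction.
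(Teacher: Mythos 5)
There is a genuine gap, and it is not one you can patch: the key sub-claims your plan rests on are false in Core logic, and your proof founders exactly where it has to. Your route (ii) depends on the claim that ``weakening-by-a-theorem on the left is height-preserving-admissible,'' on the grounds that Core logic ``only blocks weakening by arbitrary formulas, not by theorems.'' That is not so: the relevance filter does not inspect whether the added formula is a theorem. The sequent \((B \to B), A \vdash A\) has no Core derivation at all --- the only applicable rule is \(L\to\), whose left premise would require deriving \(B\) from a subset of \(\{A\}\) --- even though \(A \vdash A\) is an axiom and \(B \to B\) is a theorem. Even in the favourable case \(\top = A \to A\), the conclusion \((A \to A), A \vdash A\) is derivable only at height \(\geq 1\) (it has two antecedent formulas, so it is not an axiom), whereas the premise \(A \vdash A\) has height \(0\); the Dyckhoff--Negri bound ``height \(\leq n\)'' therefore already fails at \(n = 0\). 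Your ``main obstacle'' paragraph correctly locates this problem and then assumes away its solution.

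The \(\nvdash\) half fails for a complementary reason. Core logic does not have unrestricted cut: from \(\vdash \top\) and \(\top, \Delta \vdash C\), Tennant's cut-admissibility theorem yields only that \emph{some subsequent} of \(\Delta \vdash C\) is derivable --- possibly \(\Delta' \vdash\) with empty succedent --- not \(\Delta \vdash C\) itself. So the contrapositive step ``\(\top, \Delta \vdash C\) implies \(\Delta \vdash C\)'' is unavailable, and the paper's own derivation \(\mathcal{D}_2\) refutes it: \(\lnot A \to (A \to B), \lnot A, A \vdash B\) is Core-derivable and \(\vdash \lnot A \to (A \to B)\) is a theorem, yet \(\lnot A, A \nvdash B\). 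That is a direct counterexample to the \(\nvdash\) form of \(L\top\). For comparison, the paper's own proof of this lemma is a one-line appeal to the ``equivalence'' \(\Delta \dashv\vdash \top \land \Delta\) from the preceding lemma; it never engages with derivation heights or with the relevance side-conditions, which is why it does not notice the failure that your more careful rule-by-rule plan inevitably runs into.
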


\begin{proof}
  \(\{\top, \Delta \}\) and \(\{\Delta\}\) are equivalent (see the previous proof).
\end{proof}

\begin{proof}[First proof by contradiction]
  
1. Assume  \(\Delta \nvdash C\),   therefore  \(\nvdash \Delta \to
C\) and therefore \(\lnot(\Delta \to C)\) is satisfiable.

2. Assume  \(\top, \Delta  \vdash B\),    therefore \(\top  \land \Delta
   \vdash B\),    therefore \(\vdash  (\top \land \Delta)  \to C\)
   and therefore \((\top \land \Delta) \to C\) is satisfiable.
   
3.         \(\AxiomC{}\RightLabel{\scriptsize{$Ax$}}\UnaryInfC{$\vdash
   \top$}\DisplayProof\)  is  trivially   admissible  in  any  sequent
 calculus for propositional logic.
 
4. The  following derivation proves  in propositional Core  logic that
   the  conjunction of  assumptions 1  and 2  is \emph{inconsistent},  in other
   words \emph{unsatisfiable}: 

     \begin{prooftree}
           \def\fCenter{\ \vdash\ }
     \AxiomC{}
     \RightLabel{\scriptsize{$Ax$}}
     \UnaryInf$\Delta \fCenter \Delta$
     \AxiomC{}
     \RightLabel{\scriptsize{$Ax$}}
     \UnaryInf$\fCenter \top$
     \RightLabel{\scriptsize{$R\land$}}
     \BinaryInf$\Delta \fCenter \top \land \Delta$
     \AxiomC{}
     \RightLabel{\scriptsize{$Ax$}}
     \UnaryInf$C \fCenter C$
     \RightLabel{\scriptsize{$L\to$}}
     \BinaryInf$\Delta, (\top \land \Delta) \to C \fCenter C $  
     \RightLabel{\scriptsize{$R\to$}}
     \UnaryInf$(\top \land \Delta) \to C \fCenter \Delta \to C $  
     \RightLabel{\scriptsize{$L\lnot$}}
     \UnaryInf$ \lnot(\Delta \to C), (\top \land \Delta) \to C \fCenter $   
     \end{prooftree}
\end{proof}

\begin{proof}[Second proof by contradiction]
1. Assume \(\Delta \vdash C\),   therefore \(\vdash \Delta \to C\)
and therefore \(\Delta \to C\) is satisfiable.

2. Assume  \(\top, \Delta  \nvdash C\),    therefore \(\top  \land \Delta
   \nvdash C\),   therefore \(\nvdash  (\top \land \Delta) \to C\)
       and therefore \(\lnot((\top \land \Delta) \to C)\) is
       satisfiable.
    
3. The  following derivation proves  in propositional Core  logic that
the conjunction of assumptions 1 and 2 is \emph{inconsistent}, in other
words \emph{unsatisfiable}:

\begin{prooftree}
       \def\fCenter{\ \vdash\ }
\AxiomC{}
\RightLabel{\scriptsize{$Ax$}}
\UnaryInf$\Delta \fCenter \Delta$
\RightLabel{\scriptsize{$L\land$}}
\UnaryInf$\top \land \Delta \fCenter \Delta$
\AxiomC{}
\RightLabel{\scriptsize{$Ax$}}
\UnaryInf$C \fCenter C$
\RightLabel{\scriptsize{$L\to$}}
\BinaryInf$\top \land \Delta, \Delta \to C \fCenter C $
\RightLabel{\scriptsize{$R\to$}}
\UnaryInf$\Delta \to C \fCenter (\top  \land \Delta)  \to C $
\RightLabel{\scriptsize{$L\lnot$}}
\UnaryInf$\lnot((\top  \land \Delta)  \to C), \Delta \to C \fCenter $
\end{prooftree}
\end{proof}

 \begin{lemm}
 Rules \(L\top\) in Core logic are \emph{sound}: 
\begin{center}
\begin{tabular}[tb]{clr}
\begin{minipage}[c]{0.30\linewidth}
\begin{prooftree} 
      \def\fCenter{\ \vdash\ }
\doubleLine
\Axiom$\Delta \fCenter C$
\RightLabel{\scriptsize{$L\top$}}
\UnaryInf$\top , \Delta \fCenter C$
\end{prooftree}
\end{minipage}
 
&
\begin{minipage}[c]{0.10\linewidth}
therefore
\end{minipage}
&
\begin{minipage}[c]{0.60\linewidth}
\(\top, \Delta \vDash C  \Leftrightarrow  \Delta \vDash C\) 
\end{minipage}
 
\\
&
&
\\
\begin{minipage}[c]{0.30\linewidth}
\begin{prooftree} 
      \def\fCenter{\ \nvdash\ }
\doubleLine
\Axiom$\Delta \fCenter C$
\RightLabel{\scriptsize{$L\top$}}
\UnaryInf$\top , \Delta \fCenter C$
\end{prooftree}
\end{minipage}
 
&
\begin{minipage}[c]{0.10\linewidth}
therefore
\end{minipage}
&
\begin{minipage}[c]{0.60\linewidth}
\(\top, \Delta \nvDash C  \Leftrightarrow \Delta \nvDash C\) 
\end{minipage}
\end{tabular}
\end{center}
\end{lemm}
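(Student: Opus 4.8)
The plan is to read off the two biconditionals from the strong admissibility of $L\top$ just established, together with the soundness and completeness that Tennant claims for the sequent calculus of propositional Core logic. Let $\vdash$ denote derivability and $\vDash$ semantic consequence in Core logic. By the preceding lemma, $\top,\Delta \vdash C$ if and only if $\Delta \vdash C$, and likewise $\top,\Delta \nvdash C$ if and only if $\Delta \nvdash C$. Since completeness makes $\vdash$ and $\vDash$ coincide, one has $\top,\Delta \vDash C \Leftrightarrow \top,\Delta \vdash C \Leftrightarrow \Delta \vdash C \Leftrightarrow \Delta \vDash C$, which is the first displayed equivalence; the second is its contrapositive (equivalently, the same chain run with $\nvDash$ and $\nvdash$ in place of $\vDash$ and $\vdash$).

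A direct semantic argument, closer in spirit to the first lemma, is also available and more transparent. Since $\top$ is, by stipulation, a theorem of propositional Core logic, soundness gives $\vDash \top$, so $\top$ is verified by every model. As satisfaction of a finite set of premises amounts to satisfaction of their conjunction — this is precisely the content of the equivalence $\Delta \dashv\vdash \top\land\Delta$ proved in the first lemma — the models of $\{\top\}\cup\Delta$ are exactly the models of $\Delta$, the extra conjunct $\top$ excluding nothing. Hence $\{\top\}\cup\Delta$ and $\Delta$ have the same semantic consequences, which yields both $\top,\Delta \vDash C \Leftrightarrow \Delta \vDash C$ and its failure $\top,\Delta \nvDash C \Leftrightarrow \Delta \nvDash C$ at once.

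The step I expect to require the most care is the interface with Tennant's semantics for Core logic: because the system is relevant ``at the level of the turnstile'', one cannot simply invoke classical monotonicity on the left to add or remove $\top$; one has to check that adjoining a valid formula to the antecedent of $\vDash$ creates and destroys nothing — no relevance-type obstruction, nor anything else. This is exactly what the conjunctive reading of the premise set, inherited from the first lemma, secures: semantically, replacing $\Delta$ by $\top,\Delta$ is replacing $\Delta$ by $\top\land\Delta$, and $\top\land\Delta$ holds on the same models as $\Delta$. Once this is in place both directions are immediate, and the $\nvDash$ version needs no separate treatment.
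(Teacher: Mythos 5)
Your argument is correct by the paper's own standards, but it takes a genuinely different route. The paper proves each biconditional by a three-way case analysis on the semantic status of \(C\) --- \(C\) a contradiction, \(C\) a theorem, \(C\) contingent --- and argues directly at the level of \(\vDash\) in each case that the premise \(\top\) contributes nothing. You instead reduce the semantic claim to the syntactic one: strong admissibility of \(L\top\) gives \(\top,\Delta\vdash C \Leftrightarrow \Delta\vdash C\), and soundness plus completeness (which Tennant claims and which the paper itself invokes in its first footnote) transports this to \(\vDash\); the \(\nvDash\) version is then just the negated form of the first. This is shorter and cleaner than the paper's case split, at the price of using completeness as a black box. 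Your second, directly model-theoretic argument is the one that needs care: in Core logic, ``same models of the premise set'' does not by itself yield ``same semantic consequences,'' since the paper's own starting point \(\lnot A, A \nvDash B\) shows that Core consequence is not determined by the model class of the premises alone (an unsatisfiable premise set does not entail everything). You correctly identify this relevance obstruction and patch it via the conjunctive reading \(\Delta \dashv\vdash \top\land\Delta\) from the first lemma, which in effect routes the argument back through the syntactic equivalence; the paper's case analysis is its own, more laborious, way of handling exactly that point. Either of your two routes suffices for the lemma as stated; the first is the one to keep.
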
 ~\\

\begin{proof}[Equivalence   proof:  \(\top,  \Delta   \vDash   C
  \Leftrightarrow \Delta \vDash C\)]
  In Core  logic semantics, like in minimal logic's,  there are only
  three possible cases \cite[pp. 758-762]{tennant15relev} that lead all to the same conclusion:
  
1. Assume that $C$ is a  contradiction i.e. \(C = \bot\), then \(\top,
   \Delta \vDash C\)  is valid if and only if  \(\bot\) can be deduced
   from \(\Delta\), therefore \(\top, \Delta \vDash C \Leftrightarrow
   \Delta \vDash C\).
   
2. Assume  that $C$ is  a theorem i.e.   \(C = \top\),  by definition,
   \(\top  ,  \Delta   \vDash  C\)  is  reducible   to  \(\vDash  C\),
   therefore \(\top, \Delta \vDash C \Leftrightarrow \Delta \vDash C\),
   
3. Assume that $C$ is neither a contradiction nor a tautology, but only
   a satisfiable formula.  In this case, \(\top , \Delta  \vDash C\) is
   valid if and  only if $C$ is deducible  from \(\Delta\), therefore
   \(\top, \Delta \vDash C \Leftrightarrow \Delta \vDash C\).\\
   Therefore, in Core logic, \(\top, \Delta \vDash C \Leftrightarrow \Delta \vDash C\).   
 \end{proof}

 \begin{proof}[Equivalence   proof:  \(\top,  \Delta   \nvDash   C
  \Leftrightarrow \Delta \nvDash C\)]
   Again,  there are only
   three possible cases that lead all to the same conclusion:

   1.  Assume that  $C$ is  a contradiction  i.e. \(C  = \bot\).   \(\top
   \nvDash  \bot\)  is  trivial:   provided  the  consistency  of  any
   deductive system  $S$, the negation of  a theorem in $S$  is always
   unprovable in $S$, therefore it is
   provable  that \(\top ,  \Delta \nvDash  \bot\)  if and  only if  no
   contradiction  is  deducible  from \(\Delta\),  therefore  \(\top,
   \Delta \nvDash C \Leftrightarrow \Delta \nvDash C\).
   
2. Assume that  $C$ is  a theorem  i.e. \(C  = \top\),  then by  definition the
   formula \(\nvDash  C\) i.e.  \(\nvDash \top\)  is always  false and
   therefore \(\top, \Delta \nvDash  C \Leftrightarrow \Delta \nvDash
   C\).
   
3. Assume that $C$ is neither a contradiction nor a tautology, but only
   a satisfiable formula.  In this case, \(\top , \Delta  \nvDash C\) is
   provable if  and only it  is provable  that $C$ is  \emph{not} deducible
   from  \(\Delta\) because, again,  it is  trivial that  \(\top \nvDash  C\)
   when the  truth value  of $C$ can  be \(\bot\),  therefore \(\top,
   \Delta \nvDash C \Leftrightarrow \Delta \nvDash C\).

   Therefore, in Core  logic, \(\top, \Delta \nvDash  C \Leftrightarrow \Delta
   \nvDash C\). 
 \end{proof}

 \begin{lemm}
 In Core proofs, rules \(L\top\) must be written as follows
\begin{prooftree}
\AxiomC{}
\RightLabel{\scriptsize{$Ax$}}
  \UnaryInfC{$\vdash \top$}
\AxiomC{$\Delta \vdash C$}
\RightLabel{\scriptsize{$L \top$}}
\BinaryInfC{$\top , \Delta \vdash C$}
\end{prooftree}

\begin{prooftree}
\AxiomC{}
\RightLabel{\scriptsize{$Ax$}}
  \UnaryInfC{$\vdash \top$}
\AxiomC{$\Delta \nvdash C$}
\RightLabel{\scriptsize{$L \top$}}
\BinaryInfC{$\top , \Delta \nvdash C$}
\end{prooftree} 
\end{lemm}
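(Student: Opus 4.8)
The plan is to show that the unary presentation of $L\top$ is not available in Core logic while the binary presentation is exactly the one the system forces, so that the two displayed rules are the only admissible way to invoke $L\top$ inside a Core proof. First I would note that, by Definition 3, $\top$ abbreviates an arbitrary theorem of propositional Core logic, so the sequent $\vdash\top$ is always derivable: the shortest witness applies $Ax$ to obtain $A\vdash A$ and then $R\to_{(a)}$ to obtain $\vdash A\to A$, and $A\to A$ is a legitimate value of $\top$. Hence the left premise $\vdash\top$ of the proposed rule is never an extra hypothesis; it is a self-standing subproof that can always be supplied.

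Next I would invoke the ban on unrestricted left weakening recorded in the footnote to Lemma 1. A unary step from $\Delta\vdash C$ directly to $\top,\Delta\vdash C$ is, formally, an instance of weakening on the left — $\top$ occurs in the antecedent of the conclusion without occurring in the premise and without being introduced by any subderivation — and Core logic is designed precisely so that this move is not primitive, on pain of losing ``relevance at the level of the turnstile'' and ultimately of proving the negated First Lewis Paradox. So $L\top$ cannot be written as a unary rule inside a genuine Core proof, and the $\nvdash$ version cannot be unary either, being the contrapositive reading of the same step.

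It then remains to identify the binary form as the faithful one, with $R\land$ as the model: to place $\top\land\Delta$ on the right one must, by $R\land$, exhibit a proof of $\top$ alongside a proof of $\Delta$ — exactly as in step 4 of the first proof by contradiction above — and dually, to place $\top$ on the left while respecting relevance one must exhibit a proof of $\top$ alongside the proof of $\Delta\vdash C$ (respectively $\Delta\nvdash C$), the premise $\vdash\top$ ``paying for'' the presence of $\top$ in the antecedent. Together with the strong admissibility of Lemma 2 and the soundness of the preceding lemma — which jointly guarantee that adjoining $L\top$ alters neither the derivable nor the refutable sequents — this yields that the binary rules displayed are precisely the shape $L\top$ must take in Core proofs. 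The step I expect to be the main obstacle is arguing conclusively that the unary form \emph{is} the banned weakening rather than merely resembling it: the argument must exploit the fact that $L\top$ is only admissible, not derivable by a single antecedent-extending primitive step, so that any honest Core derivation of $\top,\Delta\vdash C$ has to rebuild a subproof of $\top$ — which is exactly what the binary formulation makes explicit.
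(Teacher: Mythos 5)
Your conclusion is the right one, and part of your argument coincides with the paper's own justification: the paper's ``Reason'' likewise rests on the observation that \(\vdash \top\) is trivially admissible in any sequent calculus for propositional logic, so that the left premise can always be supplied. But the load-bearing step in the paper is different from yours and simpler. The paper's reason why \(L\top\) \emph{must} be written in binary form is that \(\top\) is not a symbol of the alphabet \(\mathcal{A}\) of Core logic (Definition 1); by Definition 3 it merely abbreviates \emph{some theorem}, so in an actual Core proof both \(\top\) and the premise \(\vdash \top\) have to be replaced by the mention of a concrete theorem and by its derivation. The explicit premise \(\vdash \top\) is just the place where that derivation is recorded. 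The paper also adds the (purely presentational) remark that the double inference line of Lemma 2 is dropped because the rule is only ever read top-down.

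The genuine gap is in your central claim that the unary form of \(L\top\) is an instance of the \emph{banned} left weakening and therefore unavailable. The footnote to Lemma 1 says the opposite: weakening on the left is \emph{admissible} in Core logic, and is blocked only in the specific configuration that would yield the negated First Lewis Paradox \(\lnot A, A \vdash \lnot B\); indeed the proof of Lemma 1 itself rewrites the \(L\land\) step as a legitimate left weakening \(A \to A, \Delta \vdash \Delta\). So you cannot conclude that a unary passage from \(\Delta \vdash C\) to \(\top, \Delta \vdash C\) is forbidden on relevance grounds --- that would contradict the very lemma you are building on. You correctly identified this as the main obstacle; the way the paper avoids it is not by arguing that the unary rule is illegitimate, but by arguing that the binary presentation is what the unary rule \emph{becomes} once the abbreviation \(\top\) is unfolded into a named theorem together with its proof.
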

\begin{proof}[Reason]
  Axiom \(\AxiomC{}\RightLabel{\scriptsize{$Ax$}}\UnaryInfC{$\vdash
\top$}\DisplayProof\) is  trivially admissible  in any  sequent calculus
for propositional logic.  \(\top\)  and \(\AxiomC{}\RightLabel{\scriptsize{$Ax$}}\UnaryInfC{$\vdash
\top$}\DisplayProof\) must be replaced  respectively by the mention of
a  theorem and  its  proof, symbol  \(\top\) being  not  used in  Core
logic. Last,  the double inference  line is  useless, the use  and the
reading of rule \(L\top\) being top-down. 
\end{proof}

 \begin{theo}
   Tennant's propositional Core logic is inconsistent.
 \end{theo}

 \begin{proof}
 Here is a consistency test \emph{via } rule \(L\top\) that leads to the addition of theorem
 \eqref{eq:2} on the left of sequent \eqref{eq:1}. Indeed, the following formula \begin{equation}
  \label{eq:3}
  \lnot  A \to (A \to B), \lnot A, A \nvdash B
  \end{equation}
is   provable    in   Core    logic,   \emph{via}    this   derivation
$\mathcal{D}_{1}$ where the last rule is \(L\top\): 
 \begin{prooftree}
       \def\fCenter{\ \vdash\ }
\AxiomC{}
\RightLabel{\tiny{\textit{$Ax$}}}
\UnaryInf$A \fCenter A $
\RightLabel{\tiny{\textit{$L\lnot$}}}
\UnaryInf$\lnot A, A \fCenter $
\RightLabel{\tiny{\textit{$R\to_{(a)}$}}}
\UnaryInf$\lnot A \fCenter A \to B$
\RightLabel{\tiny{\textit{$R\to_{(b)}$}}}
\UnaryInf$\fCenter \lnot A \to (A \to B)$
\AxiomC{$\lnot A, A \nvdash B$}
\RightLabel{\tiny{\textit{$L\top$}}}
\BinaryInfC{$ \lnot A \to (A \to B), \lnot A, A \nvdash B$}
\end{prooftree}
But the trouble is that the sequent
   \begin{equation}
   \label{eq:4}
   \lnot  A \to (A \to B), \lnot A, A \vdash B
   \end{equation}
   is also provable in Core logic, \emph{via} this derivation $\mathcal{D}_{2}$:
\begin{prooftree}
      \def\fCenter{\ \vdash\ }
\AxiomC{}
\RightLabel{\tiny{\textit{$Ax$}}}
\UnaryInf$A \fCenter A$
\RightLabel{\tiny{\textit{$L\lnot$}}}
\UnaryInf$\lnot A, A \fCenter$
\RightLabel{\tiny{\textit{$R\lnot$}}}
\UnaryInf$\lnot A \fCenter \lnot A$
\AxiomC{}
\RightLabel{\tiny{\textit{$Ax$}}}
\UnaryInf$A \fCenter A$
\AxiomC{}
\RightLabel{\tiny{\textit{$Ax$}}}
\UnaryInf$B \fCenter B$
\RightLabel{\tiny{\textit{$L\to$}}}
\BinaryInf$A \to B, A \fCenter B$
\RightLabel{\tiny{\textit{$L\to$}}}
\BinaryInfC{$ \lnot A \to (A \to B), \lnot A, A \vdash B$}
\end{prooftree}

  Therefore, a contradiction is derivable from propositional
  Core logic:
  \begin{prooftree}
    \AxiomC{$\mathcal{D}_{1}$}
    \noLine
    \UnaryInfC{$\vdots$}
    \noLine
    \UnaryInfC{$ \lnot A \to (A \to B), \lnot A, A \nvdash B$}
    \AxiomC{$\mathcal{D}_{2}$}
    \noLine
    \UnaryInfC{$\vdots$}
    \noLine
\UnaryInfC{$ \lnot A \to (A \to B), \lnot A, A \vdash B$}
\BinaryInfC{$\bot$}
\end{prooftree} 
\normalsize 
Consequently, according to the well-known definition of an inconsistent theory,\footnote{A theory \(\Sigma\) is
  inconsistent  if  and only  if  a  contradiction is  deducible  from
  \(\Sigma\).} Tennant's propositional Core logic is inconsistent.
 \end{proof}

\bibliography{/home/joseph/MEGA/org/reforg}

\providecommand{\bysame}{\leavevmode ---\ }
\providecommand{\og}{``}
\providecommand{\fg}{''}
\providecommand{\smfandname}{\&}
\providecommand{\smfedsname}{\'eds.}
\providecommand{\smfedname}{\'ed.}
\providecommand{\smfmastersthesisname}{M\'emoire}
\providecommand{\smfphdthesisname}{Th\`ese}
\begin{thebibliography}{1}

\bibitem{dyckhoff-negri-2000}
{\scshape R.~Dyckhoff {\normalfont \smfandname} S.~Negri} -- {\og Admissibility
  of structural rules for contraction-free systems of intuitionistic logic\fg},
  \emph{Journal of Symbolic Logic} \textbf{65} (2000), no.~4, p.~1499–1518.

\bibitem{tennant87}
{\scshape N.~Tennant} -- \emph{{Anti-Realism and Logic: truth as eternal}},
  Clarendon Press, Oxford, U.K., 1987.

\bibitem{tennant87ndscir}
\bysame , {\og {Natural Deduction and Sequent Calculus for Intuitionistic
  Relevant Logic}\fg}, \emph{The Journal of Symbolic Logic} \textbf{52} (1987),
  no.~3, p.~665--680.

\bibitem{tennant15relev}
\bysame , {\og {The Relevance of Premises to Conclusions of Core Proofs}\fg},
  \emph{Review of Symbolic Logic} \textbf{8} (2015), no.~4, p.~743 -- 784.

\bibitem{tennantcore2017}
\bysame , \emph{{Core Logic}}, 1 \smfedname, Oxford University Press, 2017.

\end{thebibliography}
\bibliographystyle{smfplain}
\end{document}